\newtheorem{prop}{Proposition}
\newcommand{\IC}{\mathbb{C}}
\newcommand{\IN}{\mathbb{N}}
\title{Univalence of a certain quartic function} 
\author{Jimmy Dillies}
\address{Department of Mathematics, Georgia Southern University, 
Statesboro, GA 30460, U.S.A.}
\email{jdillies@georgiasouthern.edu}
\urladdr{http://jimmy.klacto.net/}
\thanks{The author would like to thank Pack 935 for its spartan yet warm hospitality while this note was written.}
\date{}
\keywords{univalent, quartic}
\subjclass{30C45}
\begin{document}

\maketitle 


\begin{abstract}
\noindent We give a short proof that the quartic polynomial $f(z)=\frac 1 6 z^{4} + \frac 2 3 z^{3} + \frac 7 6 z^{2} + z$ is univalent, i.e. injective in the open unit disc, $D=\Set{ z \in \IC |  \lVert z \rVert <1 }$.
\end{abstract}


\section{Introduction}

In~\cite{DSSS18}, Dmitrishin \emph{et al.} study the relation between the stability of equilibrium in discrete dynamical systems and a problem of optimal covering of the interval $(-\mu, \mu)$ by the inverse of the polynomial image of the unit disc. 
In their work, the univalence of a family of polynomials indexed by the natural numbers, $f_N(z), N\in \IN$, has been mooted as a crucial ingredient to examine the stability of systems. 
In this note, we show that the polynomial $f_4(z)$, the first case which they do not treat, is univalent. 
This same polynomial appears in two unpublished preprints of Gluchoff and Hartmann~\cite{GluHart02,GluHart06} where they appear as extremal cases of \emph{starlike} polynomials and use notions of stability from physics as was suggested earlier by Alexander~\cite{Ale15} (see~\cite{GluHart00} for a detailed overview).
Our argument is more elementary and follows from the decomposition of $f_4$ into two quadratic functions.
Also, we show how univalence applies more generally to quartic polynomials which are composite of $\tau_{a\geq 1}(z)=(z+a)^2$.

\section{Proof}

\subsection{Decomposition.}

Consider the quartic polynomial $f(z)=f_4(z)=\frac 1 6 z^{4} + \frac 2 3 z^{3} + \frac 7 6 z^{2} + z$.
From the location of its roots it is easy to deduce that the function can be decomposed as $f(z)=q\left( (z+1)^{2} \right)$ where 
$$q(z)=\frac 1 6 \left( z^{2} +z -2 \right).$$

\begin{figure}[h]
\centering
\includegraphics[width=0.15\textwidth]{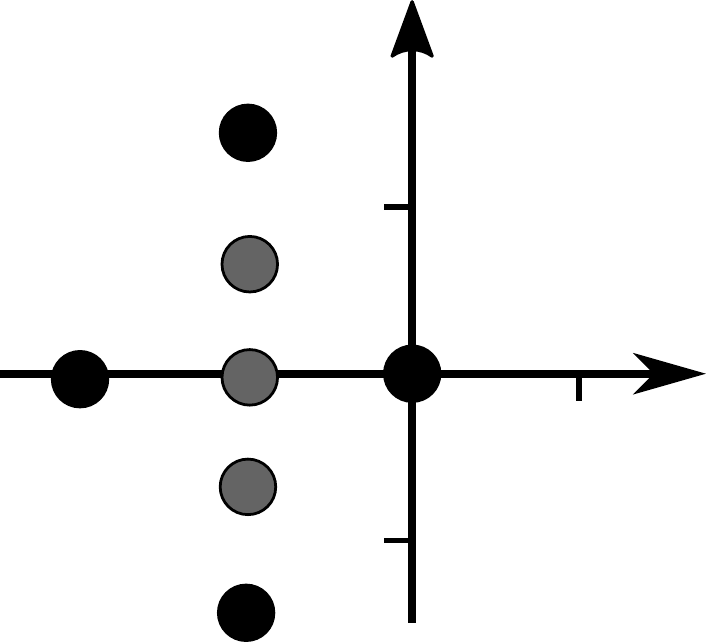}
\caption{Roots of $f$ (black) at $0,-2,-1\pm \sqrt 2 i $ and $f'$ (grey) at $-1,-1 \pm \frac {\sqrt 2} 2 i$. }
\end{figure}

\subsection{Injectivity}
Given the above decomposition, we can see that if $f(z)=f(w)$ then one of the three criteria below must be met :
\begin{enumerate}
\item $z=w$
\item $(z+1)^{2}=(w+1)^{2}$
\item $(z+1)^{2}+(w+1)^{2}=-1$.
\end{enumerate}
The first two criteria speak for themselves.
The third criterion describes the scenario where $(z+1)^{2}$ and $(w+1)^{2}$ are symmetric with respect to the apex of $q(z)$, located at $- \frac 1 2$.

\subsection{Main result.}
\begin{prop}The function $f$ is univalent.
\end{prop}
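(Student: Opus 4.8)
The plan is to rule out the third criterion, since criteria (1) and (2) already force $z=w$ (the map $z\mapsto(z+1)^2$ is injective on $D$ because $D$ is a disc not containing two points symmetric about $-1$, its only possible branch point being the center of symmetry). So the entire problem reduces to showing that the equation
$$(z+1)^{2}+(w+1)^{2}=-1$$
has no solutions with $z,w\in D$. First I would introduce the substitution $u=z+1$, $v=w+1$, so that as $z$ ranges over the open unit disc, $u$ ranges over the disc $D_1$ centered at $1$ of radius $1$, and likewise $v$; the task becomes proving $u^2+v^2=-1$ is impossible for $u,v\in D_1$.

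The key geometric observation I would exploit is that every point $u\in D_1$ has positive real part except those near the boundary, and more usefully that the image $\{u^2 : u\in D_1\}$ avoids the relevant region. Concretely, I would study the image set $S=\{u^2 : u\in D_1\}$. Writing $u=1+re^{i\theta}$ with $r<1$, one computes $u^2 = 1+2re^{i\theta}+r^2 e^{2i\theta}$; the main point is to determine the range of $\operatorname{Re}(u^2)$ and to bound how negative it can be. Since we need $\operatorname{Re}(u^2)+\operatorname{Re}(v^2)=-1$, it suffices to show that $\operatorname{Re}(u^2) > -\tfrac12$ for all $u\in D_1$, because then the sum of two such real parts exceeds $-1$ and the equation cannot hold.

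The heart of the argument is therefore the sharp estimate $\operatorname{Re}(u^2)>-\tfrac12$ on $D_1$. I would parametrize the boundary $u=1+e^{i\theta}$ (using that $\operatorname{Re}(u^2)$ is harmonic, hence its infimum over the open disc is approached on the boundary) and minimize $\operatorname{Re}\bigl((1+e^{i\theta})^2\bigr)=\operatorname{Re}(2+2e^{i\theta}+e^{2i\theta})=2+2\cos\theta+\cos 2\theta$ over $\theta$. A direct calculus check of $g(\theta)=2+2\cos\theta+\cos 2\theta$ gives $g'(\theta)=-2\sin\theta-2\sin 2\theta=-2\sin\theta(1+2\cos\theta)$, whose critical points yield a minimum value of $-\tfrac12$ attained at $\cos\theta=-\tfrac12$; on the open disc the strict inequality $\operatorname{Re}(u^2)>-\tfrac12$ follows.

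The main obstacle I anticipate is handling the boundary case carefully: the bound $-\tfrac12$ is attained exactly on $\partial D_1$, so the inequality is only strict on the open disc, and I must make sure the harmonicity/maximum-principle step is stated correctly (minimum of a harmonic function on a closed disc occurs on the boundary, and is strict in the interior unless the function is constant). Once the strict inequality $\operatorname{Re}(u^2)>-\tfrac12$ is secured for $u\in D_1$, adding the two contributions immediately gives $\operatorname{Re}(u^2+v^2)>-1$, so $u^2+v^2=-1$ is impossible, criterion (3) is vacuous, and univalence follows.
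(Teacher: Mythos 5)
Your proof is correct, and it follows the paper's own skeleton: the same decomposition and three criteria, criterion (2) dispatched because two points symmetric about $-1$ (i.e.\ with $z+w=-2$) cannot both lie in $D$, and criterion (3) ruled out by the sharp bound $\Re\left((z+1)^{2}\right)>-\frac{1}{2}$ on $D$. The one place you genuinely diverge is the verification of that bound: the paper argues geometrically that the region $\Re\left((z+1)^{2}\right)\leq-\frac{1}{2}$ is bounded by a hyperbola tangent to the unit circle at $-\frac{1}{2}\pm\frac{\sqrt{3}}{2}i$ and otherwise disjoint from the disc, whereas you invoke the minimum principle for the non-constant harmonic function $\Re(u^{2})$ on the disc $|u-1|<1$ and minimize over its boundary by calculus. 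Both verifications are elementary and yield the same sharp constant, and your version has the small advantage of removing the paper's ``without loss of generality'' step, since bounding both summands at once gives $\Re(u^{2}+v^{2})>-1$ directly. One transcription slip to fix: $(1+e^{i\theta})^{2}=1+2e^{i\theta}+e^{2i\theta}$, not $2+2e^{i\theta}+e^{2i\theta}$; your derivative $-2\sin\theta\,(1+2\cos\theta)$ and your claimed minimum $-\frac{1}{2}$ at $\cos\theta=-\frac{1}{2}$ correspond to the correct expansion (with the constant $2$ the boundary minimum would be $+\frac{1}{2}$), so nothing downstream is affected, but the displayed identity should be corrected.
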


\begin{proof}
Let us consider the above three cases:
\begin{enumerate}
\item No comment.
\item From the factorization of $(z+1)^{2}-(w+1)^{2}$ we see that unless they are equal, $z$ and $w$ cannot lie simultaneously in $D$ as their horizontal separation is $2$.
\item By contradiction, one of the two terms in the sum ought to have a real part less than $-\frac 1 2$. 
Without loss of generality let this be $(z+1)^{2}$.
Now, the regions defined by $\Re\left( (z+1)^{2} \right) \leq - \frac 1 2 \equiv (x+1)^{2}-y^{2} \leq - \frac 1 2$, where $z=x+iy$ is the Cartesian decomposition, and $D$ are bordered respectively by a hyperbola and a disc which are tangent at the points $- \frac 1 2 \pm \frac{\sqrt{3}}{2} $ and do not overlap.
Henceforth, $z$ and $w$ do not both lie in the open unit disc.
\end{enumerate}
\end{proof}

\subsection{Remark.} The above result can be generalized \emph{mutatis mutandis} to real quartics of the form
 $$q((z+a)^{2})$$ 
 where $|a| \geq 1$. 
 When $|a| > \sqrt{2}$ there is no constraint on $q$; for values $1 < |a| < \sqrt{2}$ one needs to pay some extra care to the location of the roots of $q$, namely univalence requires that $C>\frac{2-a^{2}}{4}$ where $-C$ is the sum of the roots.



\begin{thebibliography}{1}

	\bibitem{Ale15}James W. Alexander, {\em Functions which map the interior of the unit circle upon simple regions}, Annals of Math. 17 (1915), pp 12-22.

	\bibitem{GluHart00}Alan Gluchoff and Frederick Hartmann, {\em On a ``Much Underestimated" Paper of Alexander}, Archive for History of Exact Sciences, Vol. 55, No. 1 (August 2000), pp. 1-41

  \bibitem{GluHart02} Alan Gluchoff and Frederick Hartmann, {\em Zero sets of polynomials univalent in the unit disc.}  e-script 2002.
  
  \bibitem{GluHart06}Alan Gluchoff and Frederick Hartmann, {\em A ``forceful'' construction of 1-1 complex polynomial mappings.} e-script 2006.  

  \bibitem{DSSS18}Dmitriy Dmitrishin et al. {\em Dimitrov's question for the polynomials of degree 1,2,3,4,5,6}. \url{arXiv:1808.08636}
 \end{thebibliography}
\end{document}